\numberwithin{equation}{section}
\newtheorem{theorem}{Theorem}[section]
\newtheorem{lemma}[theorem]{Lemma}
\theoremstyle{definition}
\theoremstyle{definition}
\newtheorem{definition}[theorem]{Definition}
\newtheorem{df}[theorem]{Definition}
\newtheorem*{definition*}{Definition}
\newtheorem{problem}{Problem}
\newtheorem{observation}{Observation}
\theoremstyle{remark}
\newtheorem*{remark*}{Remark}
\newcommand{\R}{\mathbb R}
\newcommand{\ep}{\varepsilon}
\DeclareMathOperator{\len}{length}
\DeclareMathOperator{\dist}{dist}
\begin{document}

\title
[Polyhedral Finsler spaces with locally unique geodesics]
{Polyhedral Finsler spaces \\ with locally unique geodesics}

\author{Dmitri Burago}                                                          
\address{Dmitri Burago: Pennsylvania State University,                          
Department of Mathematics, University Park, PA 16802, USA}                      
\email{burago@math.psu.edu}                                                     
                                                                                
\author{Sergei Ivanov}
\address{Sergei Ivanov:
St.~Petersburg Department of Steklov Mathematical Institute,
Russian Academy of Sciences,
Fontanka 27, St.Petersburg 191023, Russia}
\email{svivanov@pdmi.ras.ru}

\thanks{The first author was partially supported                                
by NSF grant  DMS-1205597.
The second author was partially supported by
RFBR grant 11-01-00302-a.}

\subjclass[2010]{53C23, 53C60}

\keywords{Polyhedral spaces, Finsler Geometry, Normed Spaces, CAT(0).}

\begin{abstract}
We study Finsler PL spaces, that is simplicial complexes glued out of simplices 
cut off from some normed spaces.
We are interested in the class of Finsler PL spaces featuring local uniqueness
of geodesics (for complexes made of Euclidean simplices, this property is 
equivalent to local CAT(0)).
Though non-Euclidean normed
spaces never satisfy CAT(0), it turns out that they share many common
features. In particular, a globalization theorem holds:
in a simply-connected Finsler PL space local uniqueness of geodesics
implies the global one.
However the situation is more delicate here:
some basic convexity properties do not extend to the PL Finsler case.
\end{abstract}

\maketitle

\section{Preliminaries and discussion}

In this paper we discuss similarities and differences between the
geometry of polyhedral (PL) spaces built out of Euclidean simplices
and simplices in normed spaces.
Generally speaking, we are interested in an analog
of nonpositive curvature for Finsler PL spaces and for
 Finsler geometry in general (see \cite{BH}, \cite{BBI} and \cite{BCS} for
the definition of nonpositive curvature and basics in metric and Finsler geometry).

\begin{definition}\label{d:nonfocusing}
We say that a PL space 
is \textit{non-focusing} if it has locally unique minimal geodesics,
i.e., if every point of $X$ has a neighborhood such that
any two points in this neighborhood are connected by
a unique shortest path.
\end{definition}

For Euclidean PL spaces, this condition is equivalent
to nonpositive curvature.
The Globalization Theorem for CAT(0) spaces implies that for simply connected
Euclidean PL spaces the local uniqueness of geodesics implies the global one:
the space is CAT(0) and therefore every two points are connected by a
unique geodesic.
Note however that the only CAT(0) normed spaces (as well as the only normed
spaces with any curvature bound) are Euclidean spaces. 
In terms of the results, the main assertion proven in this note 
is an  analog of the Globalization Theorem for 
Finsler PL spaces (Theorem \ref{glob}).
However the discussion of related aspects of Finsler 
PL geometry maybe equally important.

Let us first fix some basic terminology. 
A \textit{Finsler PL space} is a
length metric space made of a collection of convex
polyhedral sets in normed spaces glued along faces in such a way 
that faces are identified along affine isometries.
(By \textit{faces} we mean faces of all dimensions,
not just the maximal one.)
We assume that all norms are  $C^1$-smooth and strictly convex
(in the sense that the boundary of the unit ball does not
contain straight line segments).
We say that a PL space is Euclidean if all norms are Euclidean. 
We consider only locally finite PL
spaces, that is the spaces where every point belongs to finitely many faces. Now we can already
formulate the Globalization Theorem for Finsler PL spaces:

A curve $\gamma$ in a length space is a \textit{geodesic} if it is locally minimazing,
that is, every point $t_0$
in the domain of $\gamma$ has a neighborhood such that
$\gamma$ is a shortest path when restricted to that neighborhood.

\begin{theorem}
\label{glob}
In a simply connected non-focusing Finsler PL space,
there is only one geodesic between every two points.
\end{theorem}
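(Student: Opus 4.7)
The plan is to prove the theorem by a Cartan--Hadamard-style covering-space argument, adapted to the Finsler PL setting. Consider the space $Y$ of triples $(\gamma, p, q)$ with $\gamma$ a geodesic from $p$ to $q$, topologized by $C^0$ convergence of geodesics parametrized proportionally to arclength, together with the endpoint projection $\pi \colon Y \to X \times X$. The goal is to show that $\pi$ is a covering map with at least one singleton fiber; since $X$, and hence $X \times X$, is simply connected, $\pi$ will then be a homeomorphism, giving a unique geodesic between every pair of endpoints.

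The first step is a local uniqueness statement strengthening non-focusing: every point $x \in X$ admits a neighborhood $W$ such that any geodesic whose image is contained in $W$ is the unique shortest path between its endpoints. I would bootstrap from non-focusing, using local finiteness of the PL structure and strict convexity of the norms to rule out any locally-minimizing path inside $W$ of length exceeding that of the shortest path. This is one of the essential technical points. As a consequence, for $(p, q)$ with $p$ and $q$ sufficiently close every geodesic from $p$ to $q$ lies in such a $W$, so the fiber $\pi^{-1}(p, q)$ is a singleton.

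The next step is the \emph{perturbation lemma} asserting that $\pi$ is a local homeomorphism: for any geodesic $\gamma \colon [0, L] \to X$ there is a neighborhood of $(\gamma(0), \gamma(L))$ in $X \times X$ and a continuous map from it to $Y$ assigning to each nearby pair $(p', q')$ the unique geodesic from $p'$ to $q'$ close to $\gamma$ in $C^0$. I would subdivide $\gamma$ into short segments, each lying in a neighborhood supplied by the previous step, and propagate endpoint perturbations through each breakpoint. The mechanism is the refraction (Snell-type) law at each face, which is continuous and locally invertible thanks to strict convexity and $C^1$-smoothness of the norms. This is the main obstacle; it is where the Finsler PL setting departs from the Euclidean CAT(0) one, and handling faces of positive codimension carefully is the delicate part.

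To promote $\pi$ from a local homeomorphism to a covering, I would verify the path-lifting property: given a continuous path $(p_s, q_s)_{s \in [0, 1]}$ in $X \times X$ and a geodesic $\gamma_0$ from $p_0$ to $q_0$, the lift $s \mapsto \gamma_s$ extends over all of $[0, 1]$. The set of $s$ for which the lift is defined is open by the perturbation lemma and closed by local compactness (locally finite PL spaces are locally compact) together with Arzel\`a--Ascoli applied to a sequence $\gamma_{s_n}$ with $s_n \to s_\infty$: their lengths vary continuously and stay bounded on the closed interval of existence, so a convergent subsequence yields a limit geodesic. Once $\pi$ is a covering of the simply connected base $X \times X$, each connected component of $Y$ maps homeomorphically; combined with the singleton fiber from the first step, $Y$ must consist of a single component, so $\pi$ is a bijection, proving the theorem.
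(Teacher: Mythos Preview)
Your covering-space strategy is natural, but the perturbation lemma --- which you correctly flag as the main obstacle --- does not go through as sketched. Propagating perturbations by a ``Snell-type refraction law'' makes sense only at codimension-one faces; when a geodesic meets an edge or vertex of the complex there is no well-defined outgoing direction, so no continuous, let alone locally invertible, transition map exists there. More fundamentally, the authors remark explicitly (just after their uniqueness-in-length-spaces theorem) that although each geodesic has a $C^0$-neighborhood in which it is the unique minimiser, the sizes of these neighborhoods are not \emph{a priori} locally bounded from below as the geodesic varies; they describe a hypothetical family $\{\gamma_\tau\}_{\tau\in(-\varepsilon,\varepsilon)}$ with common initial point and with $q_{-\tau}=q_\tau$, for which the endpoint map is two-to-one arbitrarily near $\gamma_0$. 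Nothing in your refraction argument excludes this picture. Your Arzel\`a--Ascoli step also has a gap: continuity of length on the half-open interval of existence does not by itself give a uniform length bound as $s\to s_\infty$.

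The paper circumvents all of this by first proving a \emph{stability} theorem --- every geodesic strictly minimises length among $C^0$-nearby curves with the same endpoints --- via orthogonal slices and tangent-cone comparisons (this is where the $C^1$ and strict-convexity hypotheses on the norms are actually used). Stability then feeds into a Birkhoff-type midpoint curve-shortening map $T$ on a finite-dimensional space of broken geodesics with fixed endpoints; iterating $T$ decreases an energy, the iterates converge continuously in the initial data to a geodesic, and applying $T^\infty$ to a fixed-endpoint homotopy produces a continuous one-parameter family of geodesics from $p$ to $q$. Stability forces the length along this family to have a strict local minimum at every parameter, which is absurd unless the family is constant. This replaces the local-homeomorphism step entirely and delivers existence and uniqueness of the nearby geodesic simultaneously, without any analysis of refraction at low-dimensional strata.
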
 

Let us note first that even gluing two normed half-spaces along
an isometry between the boundary hyperplanes is not nearly as an innocent operation 
as in  Euclidean geometry. If we glue two halfÐspaces, the resulting
space remains non-focused (this easily follows from the fact that
the distance function to a point restricted to the hyperplane along which the
half-spaces are glued is convex). However, if we look at slightly more
delicate properties, the situation is not at all that straightforward. 
Here are a few observations.

One of the key features of spaces with curvature bounds are various convexity
properties of distance functions.
As it is mentioned above, a non-Euclidean normed space is never CAT(0), however distance
functions in such spaces enjoy some convexity properties. Namely, the distance to a point 
restricted to any line is a convex function. Furthermore, the distance functions satisfy the
Busemann convexity condition \cite{Busemann}. Namely, for every triangle, the distance between the midpoints of two
sides is no larger than half of the base.  Here is the first

\begin{observation} The Busemann convexity  and even the convexity of the distance function 
to a geodesic  may fail in a space resulting from gluing two normed
half-spaces along an isometry between their boundaries.
\end{observation}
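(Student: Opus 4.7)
The plan is to construct an explicit two-half-plane example in which the distance function to a geodesic fails to be a convex function along some geodesic; the failure of Busemann convexity then follows from the standard implication that, in a geodesic space, Busemann convexity implies convexity of the distance function to any geodesic.

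I would take $X = H^+ \cup H^-$ where $H^+ = \{(x, y) : y \ge 0\}$ carries the Euclidean norm and $H^- = \{(x, y) : y \le 0\}$ carries the $\ell^4$ norm $\|(a, b)\|_- = (a^4 + b^4)^{1/4}$, glued along the boundary $\{y = 0\}$ by the identity. Both norms are $C^1$-smooth and strictly convex and restrict to $|a|$ on the common axis, so this is a Finsler PL space in the sense of the paper.

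First I would derive Snell's law for a unit-speed geodesic $\eta$ crossing the boundary at the origin, with bottom velocity $(a, b)$ and top velocity $(a', b')$. A piecewise-affine curve that is straight on each side is locally length-minimizing at the crossing precisely when the tangential component of the Legendre-dual covector of the velocity is continuous across the boundary. A short Legendre computation shows that this dual is $(a^3, b^3)$ in $H^-$ and is the vector itself in $H^+$, so Snell's condition becomes $a' = a^3$. Combined with the unit-norm constraints this yields $b = (1 - a^4)^{1/4}$ and $b' = \sqrt{1 - a^6}$, and a direct comparison shows $b' > b$ for small positive $a$; for instance $a = 1/2$ gives $b \approx 0.984$ and $b' \approx 0.992$.

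Now fix the infinite horizontal line $\gamma = \R \times \{h\} \subset H^+$ for some $h > 0$, a complete geodesic in $X$. I would verify that $\dist((x, y), \gamma) = h + |y|$ on $H^-$ and $\dist((x, y), \gamma) = |y - h|$ on $H^+$. The first identity follows by decoupling: among broken paths $(x, y) \to (s, 0) \to (s', h)$ the top leg is minimized by $s' = s$ (length $h$), and $\min_s \|(x - s, y)\|_- = |y|$ because the horizontal axis is a supporting line of each $\ell^4$ level set. Evaluated along the refracting geodesic $\eta$ with $a = 1/2$, the function $f(t) = \dist(\eta(t), \gamma)$ is piecewise linear with slope $-b$ for $t < 0$ and slope $-b'$ for $0 < t < h/b'$. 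Since $b < b'$, the slope jumps downward at the crossing, so $f$ is strictly concave at $t = 0$ and $\dist(\cdot, \gamma)$ is not convex along $\eta$. The failure of Busemann convexity follows from the implication recalled at the beginning.

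The main difficulty is the Snell computation together with the inequality $b' > b$: one needs the two norms to differ in a genuinely non-affine way, because a mere linear shear applied to the Euclidean norm in $H^-$ would force $b = b'$ (as is easily checked from Snell's law), producing no kink and hence no non-convexity. Non-Euclideanness of one glued side is therefore essential to the phenomenon.
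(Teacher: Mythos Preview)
Your construction is correct and complete: the Snell computation, the inequality $b'>b$, and the piecewise-linear distance profile are all right, and the deduction of Busemann-failure from the non-convexity of $\dist(\cdot,\gamma)$ along $\eta$ is the standard fact that in a Busemann space the distance to a convex set is convex along every geodesic.

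Your route, however, is quite different from the paper's. The paper keeps both norms Euclidean near the \emph{vertical} direction (so that the $y$-axis is a geodesic of the glued space) and arranges that the two norms have different \emph{tangent hyperplanes} at a horizontal unit vector; it then looks at the distance to the $y$-axis along the $x$-axis (the gluing line, which is itself a geodesic) and invokes the First Variation Formula to see the kink at the origin. So the paper's target geodesic crosses the seam and its test geodesic runs along it, while in your example the target geodesic $\gamma=\{y=h\}$ lies entirely in one half-plane and the test geodesic $\eta$ is a refracting ray that crosses the seam transversally. What your approach buys is explicitness: you use off-the-shelf norms ($\ell^2$ and $\ell^4$) and a direct Snell computation rather than a bespoke norm and an appeal to first variation, so every step can be checked numerically. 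What the paper's approach buys is that it isolates the mechanism --- a mismatch of tangent directions to the unit spheres at a boundary vector --- without any computation, making clear that the phenomenon is generic rather than an artifact of $\ell^4$.
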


\begin{proof}[Sketch of proof]
An example can be obtained by defining two different norms on the upper and
lower half-planes. Of course, the norms must agree on horizontal vectors. Make the
norms have different tangents at a horizontal vector but coincide with the
standard Euclidean norm near the vertical direction. Then vertical lines obviously 
remain geodesics, and by the First Variation Formula it is easy to see that the distance 
function to the $y$-axis from any point (different from the origin) in the $x$-axis is not
convex at the origin.
\end{proof}

Another thing that follows from the Busemann Convexity (enjoyed by all CAT(0)
spaces) is that two geodesics from one point diverge at least linearly. Furthermore, the
Busemann convexity prohibits the existence of two distinct geodesics asymptotic to each 
other at both ends
(that is, with the distance going to zero).

\begin{observation}
\label{asymptotic}
There exists a simply connected non-focusing Finsler PL space which contains two
geodesics starting from one point and such that the distance between them stays bounded. 
Furthermore, there are examples with two geodesics asymptotic to each other at both ends, 
as well as of a geodesic with a Jacobi field going to zero in both directions along one geodesic.
\end{observation}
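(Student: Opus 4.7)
My plan is to construct simply connected Finsler PL spaces realizing each assertion explicitly. The guiding observation is that in a non-Euclidean strictly convex normed plane $V$, two parallel straight lines are geodesics at \emph{constant} Minkowski distance from one another, since the norm is translation-invariant. So if I can make two such parallel lines emanate from a common point by attaching a small ``cap'' to a half-strip of $V$, I obtain two geodesics staying at bounded distance (the first claim); joining their other ends by a symmetric cap gives the asymptotic-at-both-ends assertion, and varying the construction in a one-parameter family produces the required Jacobi field.

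The technical content is in the design of the cap. Take a half-strip $S=[0,\infty)\times[0,1]$ in $V$ with parallel boundary rays $\ell_1,\ell_2$, and attach a $2$-dimensional PL region $C$ containing a distinguished point $O$ along the short edge of $S$. The cap $C$ is built from a small number of normed polygonal pieces whose norms are tuned so that from $O$ there exist two distinct local minimizers $\gamma_1,\gamma_2$, each exiting $C$ at a point of the short edge and then continuing into $\ell_1$ or $\ell_2$ respectively. The Finslerian Snell's law (preservation of the tangential component of the momentum covector across a face) gives enough freedom to choose the norms in $C$ so that each $\gamma_i$ is a genuine geodesic across its exit face. Simple connectedness of $C\cup S$ is automatic, since both pieces are topological disks glued along an arc.

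The principal obstacle is verifying that the resulting PL space is non-focusing---that local minimizers are unique everywhere. This has to be checked vertex by vertex in the PL complex, and in particular at each attachment corner where $C$ meets $S$; the $C^1$-smoothness and strict convexity of each norm help, but one must still choose the cap norms and the geometry of the attachment carefully to rule out focal pairs near the cap. For the ``$\dist\to 0$'' version of the asymptotic statement one can, if needed, replace the single strip $S$ by a locally finite chain of sub-strips whose successive refractions progressively shrink the transverse separation between the two candidate geodesics. The Jacobi-field claim then follows by differentiating a smooth one-parameter family of such constructions (obtained, say, by continuously varying one of the norms in $C$), yielding an infinitesimal variation of the central geodesic that vanishes in both directions.
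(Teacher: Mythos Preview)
Your cap-plus-strip idea is reasonable for the first assertion (two rays from a common point that stay at bounded distance), but it cannot deliver the other two, and the way you try to extend it runs into the very theorem being illustrated.  Two parallel lines in a strictly convex normed plane are at \emph{constant} positive distance, so no finite cap attached at the far end makes them ``asymptotic'' in the sense required here, which is $d(\gamma_1(t),\gamma_2(t))\to 0$ as $t\to\pm\infty$ for \emph{bi-infinite} geodesics.  If instead your second cap forces the two lines to meet at a point $O'$, you have produced two distinct geodesic segments from $O$ to $O'$ in a simply connected non-focusing PL space, contradicting Theorem~\ref{glob}.  Your fallback of ``a chain of sub-strips whose refractions shrink the separation'' is on the right track conceptually but is not a construction: the whole difficulty is to design the interfaces so that geodesics are genuinely \emph{attracted} to a fixed one while the complex remains non-focusing, and nothing in your sketch addresses this.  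Finally, your Jacobi-field argument is off target: a Jacobi field is a variation through geodesics in a \emph{fixed} space, not a derivative with respect to a parameter that changes the norms.

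The paper's mechanism is different in kind.  It builds a periodic ``belt'' from a rectangle with a diagonal interface separating two slightly different norms (a Euclidean one and $1.01$ times it, trimmed near the diagonal direction to make the gluing isometric).  In the universal cover, the line $x=0$ is a geodesic, and Snell refraction at the periodic interfaces makes nearby vertical geodesics \emph{exponentially} approach $x=0$.  This gives, in one stroke, a one-parameter family of geodesics converging to the central one---hence a Jacobi field decaying to zero---without ever producing two geodesics with the same endpoints.  For the ``both ends'' statement two such belts are joined by a flat rectangle.  The exponential-attraction feature is exactly what your parallel-line picture lacks, and it is what lets the construction coexist with Theorem~\ref{glob}.
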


\begin{proof}[Sketch of proof]
Such examples are easily constructed if one thinks in terms of geometric optics.
Of course, media with different refractive indices are not glued along isometries,
but the equations governing the rays in question do not change if the norms (which
in our case are not supposed to be Euclidean) are modified near the direction of
the gluing lines. This means that we can make our gluings isometric by changing the
norms in directions which have no affect on the behavior of geodesics we care of. 

Let us discuss a construction of two geodesics asymptotic to each other at both ends
in a bit more detail. The example is two-dimensional.

We start with a rectangle $[-1,1] \times [-2,2]$ divided by the line $y=x$. We keep the
(standard Euclidean) 
metric in the bottom part and multiply it by a small factor (say, 1.01) in the top one. We trim the 
norm in the top near the direction $x=y$ to make the the gluing isometric; this trimming
would not change the behaviour of geodesics which are nearly vertical.

Next, we glue the bottom side of the rectangle to the top one so that the point $(0,-2)$ is
glued to $(0, 2)$ but the bottom side is slightly stretched (by the same factor 1.01)
in order to make this gluing isometric.

The universal cover of the resulting belt is a (``vertical'') strip, and the line $x=0$ is a geodesic. 
One easily sees that there are nearby geodesics that are exponentially asymptotic
to $x=0$.

Now one simply takes two belts like the one described above and connects 
them by a rectangle attached along the $x$-axis in each of them. The resulting space 
has of course a boundary, but this is easy to attach an outside part which is irrelevant 
to all geodesics in question and does not destroy the non-focusing property.   
Now there are geodesics approaching the vertical lines $x=0$ in each of the strips,
and they are connected via the rectangle gluing the strips, which gives the 
desired example.
\end{proof}

To indicate that Globalization for Finsler PL spaces is not a ``general nonsense"
statement, let us make

\begin{observation}
\label{ob:nonsmooth}
The Globalization Theorem does not hold for Finsler PL spaces with
non-smooth (though still strictly convex) norms.
Geodesic segments may fail to be stable for such spaces.
\end{observation}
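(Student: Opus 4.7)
The plan is to exploit the set-valued nature of the first-variation (refraction) condition at PL faces when the norm is non-smooth. If a geodesic in a Finsler PL space crosses a codimension-one face $F$ at an interior point $x$ with incoming direction $u$ and outgoing direction $u'$, then matching the right and left derivatives of length forces a relation between $d\|\cdot\|(u)$ and $d\|\cdot\|(u')$ modulo the annihilator of $T_x F$. For a $C^1$ norm this relation generically has a unique solution $u'$ given $u$, giving a Snell-type refraction law; but at a direction where the unit sphere is not differentiable, the cotangent is a subdifferential---a nontrivial convex set---and the relation admits an entire interval of admissible outgoing directions.

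For the failure of globalization, I would construct a two-dimensional simply connected PL space from a few polygonal pieces equipped with a strictly convex norm whose unit sphere has finitely many corners. A convenient model takes the unit ball as the intersection of two ellipses in general position. The pieces should be glued along edges so that, from some interior point $p$, a ray in one of the \emph{corner} directions of the norm meets a face $F$. At the crossing point an interval's worth of refraction continuations is permitted, producing a one-parameter family of geodesics leaving $p$. A continuity/intermediate-value argument along a transversal target curve then yields a point $q$ reached by two distinct members of this family, giving two distinct geodesics between $p$ and $q$ despite simple connectivity.

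For the failure of stability of geodesic segments, the same example works. If $p$ is perturbed to a nearby $p'$, the direction from $p'$ to the crossing point on $F$ generically misses every corner direction of the norm, so refraction is single-valued there and the unique outgoing continuation fails to reach $q$ or any small perturbation $q'$. Hence the broken geodesic from $p$ to $q$ has no nearby counterpart joining the perturbed endpoints, even in a $C^0$-neighborhood.

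The main obstacle is verifying that the constructed space is non-focusing, since the argument used for Theorem \ref{glob} relies on smoothness of the norms. The plan is to analyze refraction directly in small neighborhoods of arbitrary points: strict convexity gives uniqueness of shortest paths on the open dense set of non-corner configurations, while the corner directions must be handled by a local compactness argument. I would choose the norms and gluing so that the corner directions are transverse to each gluing face, making the set of corner crossings on every face discrete; this isolates the bifurcation points to precisely those loci used to manufacture the globalization counterexample, and local uniqueness is recovered everywhere else.
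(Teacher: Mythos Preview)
Your core physical intuition---that a corner of the unit sphere turns the refraction law into a set-valued relation and produces a fan of geodesic continuations---is exactly the mechanism the paper exploits. But two parts of your proposal do not go through as written.

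First, and most seriously, your argument for the failure of stability addresses the wrong property. In this paper ``stable'' means that $\gamma$ is a \emph{strict local length minimizer among nearby curves with the same endpoints}; it has nothing to do with persistence of the geodesic under perturbation of the endpoints $p,q$. Showing that a perturbed $p'$ no longer sees the corner direction, and hence that no nearby geodesic joins $p'$ to $q'$, says nothing about whether the original $\gamma$ is a strict local minimum of length in $\Omega_{pq}$. The paper's mechanism is different and more direct: once you have a genuine one-parameter family $\{\gamma_t\}$ of geodesics joining the \emph{same} $p$ and $q$, you observe that the length $t\mapsto\len(\gamma_t)$ is non-constant along this family, so most $\gamma_t$ are not even critical points of length and in particular are not strict local minimizers.

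Second, your globalization construction has a gap at the refocusing step. With a single corner-crossing on a face $F$, all members of the fan emanate from the same point of $F$ into a piece of a normed plane; they are distinct rays from one point and do not meet again, so an intermediate-value argument on a transversal cannot produce a common target $q$. The paper avoids this by using a ``Russian flag'' of three strips: the \emph{middle} strip carries the non-smooth norm (corner at the vertical direction) and the outer strips are Euclidean. A vertical segment $[xy]$ in the middle strip has a whole fan of admissible continuations at \emph{both} interfaces, so for fixed $p$ (top strip) and $q$ (bottom strip) on the same vertical line one gets a family $[p,x,y,q]$ parameterized by the horizontal position of $[xy]$. This simultaneously gives the two-geodesic example and, since $\len[p,x]+\len[y,q]$ varies with that position while $\len[x,y]$ is constant, the non-constant-length family needed for the instability claim. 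Verifying non-focusing is also immediate here (convexity of the distance to a point along each horizontal gluing line), whereas in your scheme it remained the ``main obstacle''.
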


\begin{proof}[Sketch of proof]
An example is readily constructed by gluing three two dimensional strips (like a Russian
flag). We keep a Euclidean norm in the top and bottom strips and introduce a (strictly convex)
norm with corners at the vertical direction in the middle strips (it is convenient to think of 
all three norms being symmetric in their vertical axis).  One sees that vertical geodesics 
in the middle strip split as they enter the top and bottom strip (making a fan whose 
angle depends on how sharp the corners are).
Therefore two points $p,q$ in the top and bottom strips lying on the same vertical line
can be connected by a family of geodesics. Namely if $x,y$ are two points on the boundary
of the middle strip such that the segment $[xy]$ is vertical and sufficiently close
to the segment $[pq]$, then the broken line $[pxyq]$ is a geodesic of our metric.

Note that the length is non-constant along this family of geodesics.
In particular, most geodesics in this family are not critical points
of the length functional.
This surprising property is possible due to the lack of differentiability
of the length in the middle strip.
\end{proof}

Let us now speculate on some problems and directions arising from the results of the paper.
One observation is that both polyhedral and smooth 2-dimensional saddle surfaces in affine
spaces form affine classes of sets such that they are non-focusing for any choice of a
(smooth strictly convex) norm in the ambient space. Motivated by these examples, let us say that a subset in an
affine space is {\it universally non-focusing} if it is non-focusing for every smooth
and strictly convex norm in the ambient space. It would be nice to have a description 
of such affine classes of subsets, say in PL or Lipschitz category (cf. \cite{Shefel}). 
For instance, a concrete  question reads:

\begin{problem}
Is this enough to
verify the condition of being universally non-focusing only for all Euclidean norms
in the ambient space?
\end{problem}

Another challenging goal would be to define an analog of CAT(0) in Finsler geometry. There
are many problems which could be approached from this viewpoint, including finding a shortest
path amidst convex obstacles (cf. \cite{BGS}) and the shortest braid problem 
(see \cite{braid-question} and \cite{BI}).
Perhaps one could try to
look at Gromov-Hausdorff limits of non-focusing simply connected Finsler PL spaces.
Note that the limit of norms on just one simplex  maybe not strictly convex, so we need to require 
that all norms are uniformly convex; furthermore, the limit norm on each simplex
may fail to be smooth, giving rise to examples like in Observation~\ref{ob:nonsmooth}.
To avoid such examples, let us assume that the unit spheres of norms are smooth
and their curvatures are uniformly bounded
away from 0 and $\infty$. To make this 
condition affine-invariant we can measure the curvatures with respect to, for instance, the 
Euclidean metric defined by the John ellipsoid of the norm.

\begin{problem}
\label{limits}
Under the above assumptions, is this true that bi-Lipschitz limits
of  non-focusing Finsler PL spaces 
(or, more generally, Gromov--Hausdorff limits with 
systoles bounded away from zero)
also have unique geodesics?
\end{problem}

Finally, let us note that some basic properties of Euclidean PL spaces are not clear at all for 
Finsler PL.
For instance, it is known that every finite Euclidean PL space has a sub-triangulation
that admits a PL isometric map (linear on every simplex)
into a Euclidean space (cf.\ \cite{Zalgaller}, \cite{Krat}). In particular, all simplices in such
a sub-triangulation are convex. It is not clear if Finsler PL spaces admit such sub-triangulations
(even though Finsler manifolds admit isometric embeddings, see for instance \cite{BI93} and
also \cite{Shen}). Hence

\begin{problem}
Does every Finsler PL space (where all norms on simplices are smooth
and quadratically convex) admit a sub-triangulation into convex simplices?
\end{problem}

If this is true, an argument has to make use of quadratic convexity, for 
one can show that there is a 3-dimensional
counter-example  with strictly but not quadratically convex norms (where the problem is actually
localized in a neighborhood of one edge). We leave the details to the reader.

The following three problems have been suggested by an anonymous referee. We find them 
quite interesting. We have slightly reworded them here
from the version the referee had formulated them. We also add some discussion.

\begin{problem}
Given a Finlser PL space,
how to understand whether it is non-focusing?
\end{problem}

This is an attempt to generalize the Euclidean polyhedral case, 
where one has manageable conditions in terms of links and angle defects,
see e.g.~\cite{BH}. 
(Recall that non-focusing is a local property, and
a Euclidean polyhedral space is non-focusing if and only if
it is locally CAT(0).)

\begin{problem}
\label{approximation}
Which smooth Finsler manifolds can be approximated by non-focusing Finsler PL spaces
(compare with Problem~\ref{limits})?
\end{problem}

If the answer to  Problem~\ref{limits} is affirmative then approximable manifolds
must have no conjugate points.
Even if the answer to Problem~\ref{limits} is negative, there are topological pre-conditions
to the existence of approximations. 
For instance, Theorem \ref{glob} implies that the universal cover of
a non-focusing Finsler PL space is contractible.
Hence a non-aspherical space cannot carry
a non-focusing Finsler PL metric.

On the other hand, in any two-dimensional smooth Finsler manifold $M$ every point 
$p\in M$ has a neighborhood which is
bi-Lipschitz approximable by non-focusing polyhedral Finsler metrics. Indeed, by 
\cite[Theorem 1.4]{BI}, a neighborhood of $p$ admits a saddle smooth isometric embedding into a 4-dimensional
normed space $V$. Moreover this embedding is strictly saddle in the sense that
its second fundamental forms are all non-degenerate and indefinite.
A suitable fine triangulation yields a saddle PL surface in $V$
approximating the image of this embedding.
By Theorem~\ref{cone} (see Section~\ref{saddle}),
the metric of a saddle PL surface is non-focusing, hence the result.

\begin{problem}
\label{smoothing}
Given a non-focusing polyhedral Finsler space which is a smooth manifold, is it possible to approximate it
by Finsler manifolds without conjugate points or even of negative flag curvature?
\end{problem}

The example in Observation \ref{asymptotic} suggests that the answer to the flag curvature version
is likely to be negative, at least in dimensions greater than~2.

The Riemannian (vs.\ Euclidean polyhedral) versions of Problems
\ref{approximation} and~\ref{smoothing}
seem to be of interest too.
Note that solutions to the Finsler versions
do not imply the Riemannian counterparts, nor the other way round.

\medskip

The rest of the paper is organized as follows. In Section \ref{stability}, after necessary 
preliminaries, we show that in every Finsler PL space geodesic segments are stable, that 
is every geodesic between two 
points strictly minimizes the length among all $C^0$-close curves (Theorem 
\ref{t-geodesics-are-stable}). In Section \ref{uniqueness} we show that in a proper 
metric space where geodesic  segments are stable and locally unique, they are globally 
unique (Theorem \ref{t-uniqueness-in-length-space}). This immediately implies Theorem
\ref{glob}. Finally, in Section \ref{saddle} we show that simply connected PL
saddle surfaces in normed spaces are non-focusing.

\section{Stability of geodesics}
\label{stability}

We begin with basic definitions and elementary facts. 

Let $X$ be a Finsler PL space
(with a fixed decomposition into faces).
The \textit{star} of a point $p\in X$ is
the union of relative interiors of all faces containing $p$.
Note that the star is open and admits a natural arc-wise isometry
to a neighborhood of the apex of the tangent cone.
A \textit{segment} in $X$ is an affine segment contained in one face of~$X$.
Any two points $x,y\in X$ that belong to one face of $X$ are connected
by a unique segment, which is denoted by $[x,y]$.
A \textit{broken line} is a 
curve composed of a finite sequence of segments so that the end of the
preceding segment coincides with the beginning of the next one.

A Finsler PL space is a \textit{cone} with apex $p$ 
if its faces are convex polyhedral cones with apexes at~$p$.
A cone with apex $p$ has a natural one-parameter
family of dilatations 
which fix $p$ and are homotheties with coefficient $t$ in each face.
It is easy to see that every point $p$ in a Finsler PL space $X$ has a
neighborhood which is isometric to a neighborhood of an apex of a cone
(and $p$ corresponds to the apex). 
This cone is called the \textit{tangent cone} at $p$.
A \textit{radial segment}
in a cone with apex $p$ is a segment such that one of its endpoints is $p$.

\begin{lemma}
\label{l-radial-rays}
In a Finsler PL cone, radial segments are unique minimizers.
\end{lemma}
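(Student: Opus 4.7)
I would prove Lemma~\ref{l-radial-rays} by a calibration argument based on a \emph{radial function} from the apex. Define $r \colon X \to [0,\infty)$ by $r(y) = \|y - p\|_F$, where $F$ is any face of the cone containing $y$. Since the apex $p$ belongs to every face and the gluings are affine isometries fixing $p$, the definition is independent of the choice of $F$, and $r$ is continuous on $X$, with $\len([p,x]) = r(x)$ by construction. The building block is the reverse triangle inequality $|r(a) - r(b)| \le \|a - b\|_F$, valid whenever $a, b$ lie in a common face $F$, with equality (by smoothness and strict convexity of $\|\cdot\|_F$) if and only if $a$ and $b$ lie on a common ray from $p$ in $F$.

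\textbf{Step 1 (minimality).} Show that $r$ is $1$-Lipschitz along every rectifiable curve. Approximate $\gamma$ by a broken line $\sigma$ with vertices $q_0,\dots,q_n$ such that each segment $[q_{i-1},q_i]$ lies in a single face $F_i$ and $\len(\sigma) \le \len(\gamma) + \ep$; telescoping gives
\begin{equation*}
 r(\gamma(1)) - r(\gamma(0)) \le \sum_i |r(q_i) - r(q_{i-1})| \le \sum_i \|q_i - q_{i-1}\|_{F_i} = \len(\sigma),
\end{equation*}
and letting $\ep \to 0$ yields the Lipschitz estimate. Applied to an arbitrary curve from $p$ to $x$, this gives $\len(\gamma) \ge r(x) = \len([p,x])$, so $[p,x]$ is a minimizer.

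\textbf{Step 2 (uniqueness).} Let $\gamma$ be any minimizer, parametrized by arc length on $[0,r(x)]$. Step~1 then forces $r(\gamma(t)) = t$ identically. On any sub-interval where $\gamma$ stays in a single face $F$, telescoping the reverse triangle inequality along sufficiently fine partitions saturates only if each pair of consecutive points lies on a common ray from $p$; by strict convexity this pins $\gamma$ to a single ray from $p$ inside $F$. At a transition between faces $F_1$ and $F_2$ at a point $q$, the segment $[p,q]$ lies in the common subface $F_1\cap F_2$ (both $F_1$ and $F_2$ are convex cones with apex $p$), so the radial directions on the two sides coincide and $\gamma$ continues along the same ray without turning. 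Hence $\gamma$ is a single radial segment, so $\gamma = [p,x]$.

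\textbf{Main obstacle.} The delicate part is the bookkeeping for the broken-line approximation and for the equality analysis at face transitions and at points lying in several faces (lower-dimensional skeleta): one needs that the approximation of a rectifiable curve by broken lines with segments in single faces is available in this category, and that the equality argument extends from top-dimensional strata to arbitrary strata. The essential use of smoothness and strict convexity of the norms is concentrated in the equality characterization of the reverse triangle inequality, which is exactly what forces minimizers to be radial; without it one would still get minimality but not uniqueness.
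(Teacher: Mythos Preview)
Your Step~1 is correct and is exactly the paper's argument: the distance-to-apex function is $1$-Lipschitz on each face, the values agree on common subfaces, and hence radial segments minimize.

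Your Step~2, however, has a real gap. You assume there are sub-intervals on which the minimizer $\gamma$ stays in a single face, and then handle isolated ``transition points'' between faces. But a priori a minimizer is just a rectifiable curve; nothing prevents it from living entirely on lower-dimensional skeleta or from switching faces on a complicated set of times, so there may be no such clean decomposition to telescope over. Concretely, for two nearby times $t_1<t_2$ you need $\|\gamma(t_2)-\gamma(t_1)\|_F\le t_2-t_1$ to saturate the reverse triangle inequality, and this does \emph{not} follow from arc-length parametrization unless you already know $\gamma|_{[t_1,t_2]}$ lies in $F$.

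The paper sidesteps this by a much shorter endpoint argument. If a minimizer $\gamma$ from $p$ to $a$ is not the radial segment, pick $b$ on $\gamma$ arbitrarily close to $a$ with $b\notin[p,a]$; then $p,a,b$ lie in a common face $F$ (every face of the cone contains the apex), and strict convexity gives the strict triangle inequality $\|a-p\|_F<\|a-b\|_F+\|b-p\|_F$. The one nontrivial input is that $\|a-b\|_F=d(a,b)$: this follows from your Step~1 applied to the tangent cone at $a$, since $[a,b]$ is a radial segment there. Plugging in $\|a-p\|_F=\len(\gamma)$, $\|b-p\|_F=\len(\gamma|_{[0,t]})$, and $d(a,b)\le\len(\gamma|_{[t,\cdot]})$ gives the contradiction $\len(\gamma)<\len(\gamma)$.

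Note that the same observation (segments realize distances locally, by Step~1 in the tangent cone at a non-apex point) would also close your gap: for $t_1<t_2$ close enough it gives $\|\gamma(t_2)-\gamma(t_1)\|_F=d(\gamma(t_1),\gamma(t_2))=t_2-t_1$, forcing $\gamma(t_1)\in[p,\gamma(t_2)]$ and hence radiality by a connectedness argument. So your plan is salvageable, but as written it is missing precisely this ingredient, and the paper's route avoids the bookkeeping altogether.
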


\begin{proof}
First let us show that radial segments are minimizers. Indeed, in each face, the
distance to the apex $p$ of the cone is a 1-Lipschitz function, and these functions
agree on intersections of faces. This implies that the union of these functions
is 1-Lipschitz in the cone, hence the radial segments minimize length.
To show that they are unique minimizers, assume the contrary.
Then there is a minimizer $\gamma$ between $p$ and some point $a$
such that there is a point $b$ on $\gamma$ arbitrarily close to $a$
and not lying on the radial segment $[pa]$.
Since we have already shown that radial segments in all cones are minimizers,
the distance from $p$ to $b$ is realized by a radial segment
and (assuming that $b$ lies in a conical neighborhood of $a$)
the distance between $a$ and $b$ is realised by a segment $[ab]$,
and we obtain a contradiction with the strict triangle inequality
for $p$, $a$ and $b$ in one of the faces (the triangle inequality
is strict since the norms in the faces are strictly convex).
\end{proof}

\begin{lemma}
\label{l-geodesics-are-broken-lines}
Every geodesic in a Finsler PL space is a broken line.
\end{lemma}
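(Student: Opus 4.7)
The plan is to establish the claim locally at every point of $\gamma$ using the tangent cone description of a neighborhood together with the uniqueness part of Lemma~\ref{l-radial-rays}, and then to observe that the resulting ``breakpoints'' form a discrete subset of the parameter interval, so that on any compact sub-interval only finitely many segments occur.

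For the local step, I would fix $t_0$ in the domain of $\gamma$, set $p=\gamma(t_0)$, and choose $\delta>0$ so small that $\gamma|_{[t_0-\delta,\,t_0+\delta]}$ is minimizing and its image lies inside the star of $p$, which via the canonical arc-wise isometry is identified with a neighborhood of the apex of the tangent cone $T_pX$. For any $t\in(t_0,\,t_0+\delta]$ the restriction $\gamma|_{[t_0,t]}$ is then a minimizer from the apex to $\gamma(t)$, so by Lemma~\ref{l-radial-rays} it must \emph{equal} the unique radial segment joining the apex to $\gamma(t)$. Letting $t$ vary, $\gamma|_{[t_0,\,t_0+\delta]}$ is itself a radial segment, hence a single segment of $X$ lying in one face. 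The same reasoning on the other side yields a single segment on $[t_0-\delta,t_0]$. Thus near every point, $\gamma$ is a concatenation of at most two segments meeting at that point.

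To globalize, I would call $t_0$ a \emph{breakpoint} if $\gamma$ is not a single segment on any neighborhood of $t_0$, and observe that the local step forces breakpoints to be isolated: if distinct breakpoints $t_n\to t_\infty$ existed, the one-sided neighborhoods of $t_\infty$ on which $\gamma$ is a single segment would exclude $t_n$ for $n$ large. Consequently any compact sub-interval of the domain contains only finitely many breakpoints, and $\gamma$ restricted there decomposes into finitely many segments, i.e.\ is a broken line.

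The main, and essentially the only conceptual, step is the local argument; what makes it work is the \emph{uniqueness} half of Lemma~\ref{l-radial-rays}, since without uniqueness local minimality would only give that $\gamma|_{[t_0,t]}$ has the same length as the radial segment, not that it coincides with it. The strict convexity of the face norms thus enters in an essential way, just as in the proof of Lemma~\ref{l-radial-rays} itself. Local finiteness is used only implicitly through the existence of the tangent cone.
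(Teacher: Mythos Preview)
Your proof is correct and follows essentially the same approach as the paper's: use Lemma~\ref{l-radial-rays} to see that near each point the geodesic is a radial segment on either side, then pass from local to global by compactness. The paper phrases the globalization as ``choosing a finite cover'' rather than via isolated breakpoints, but this is the same idea, and your write-up simply spells out in more detail what the paper compresses into two sentences.
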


\begin{proof}
This is a standard compactness argument. Every point on the geodesic
has a neighborhood where it minimizes and 
which belongs to a cone, and by the previous lemma it is a segment
or a union of two segments in that
neighborhood. Choosing a finite cover concludes the proof.
\end{proof}

\begin{df}
Let $X$ be a length space and $\gamma\colon [a,b]\to X$
a geodesic segment.
We say that $\gamma$ is \textit{stable}
if it strictly minimizes the length among nearby curves.

More precisely, $\gamma$ is stable
if there is a neighborhood $U$ of $\gamma$
(in the space of all curves in $X$ parameterized by $[a,b]$ and
equipped with the uniform topology)
such that every curve $\gamma_1\in U$ connecting
the same endpoints satisfies $\len(\gamma_1)>\len(\gamma)$
unless $\gamma_1$ is a reparameterization of $\gamma$.
\end{df}

\begin{theorem}
\label{t-geodesics-are-stable}
Every geodesic in a Finsler PL space is stable.
\end{theorem}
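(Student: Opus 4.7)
My plan is to reduce stability to a finite-dimensional minimization via polygonal approximation, and then to extract the strict local minimum from the $C^1$-smoothness and strict convexity of the norms. By Lemma~\ref{l-geodesics-are-broken-lines}, $\gamma$ is a broken line, so using local minimality and compactness of $[a,b]$ I would fix a partition $a = t_0 < t_1 < \cdots < t_N = b$ such that each restriction $\gamma|_{[t_{j-1},t_j]}$ is a single affine segment inside one face $\sigma_j$ and is a minimizer between its endpoints. For any curve $\gamma_1$ uniformly close to $\gamma$ with the same endpoints, setting $x_j := \gamma_1(t_j)$, the length--distance inequality on each subinterval gives
\[
\len(\gamma_1) \;\geq\; \Psi(x_1,\dots,x_{N-1}) \;:=\; \sum_{j=1}^{N} d(x_{j-1},x_j),
\]
with $x_0,x_N$ fixed at the common endpoints and equality iff each $\gamma_1|_{[t_{j-1},t_j]}$ is itself a minimizer between its endpoints. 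Since $\Psi(\gamma(t_1),\dots,\gamma(t_{N-1})) = \len(\gamma)$, the task reduces to showing that $\Psi$ admits a strict local minimum at $(\gamma(t_j))_j$.

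In the main regime where each pair $(x_{j-1},x_j)$ lies in a face in common with $(\gamma(t_{j-1}),\gamma(t_j))$, the summands are honest norm-distances; $C^1$-smoothness of the norms then makes $\Psi$ a $C^1$ function, strict convexity makes each single-variable slice $x_j \mapsto d(x_{j-1},x_j)+d(x_j,x_{j+1})$ strictly convex, and the vanishing first variation at $\gamma(t_j)$ (expressing the broken-line geodesic equation) upgrades the critical point to a strict local minimum within this regime.

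The main obstacle will be handling variations that push an $x_j$ into a face not traversed by $\gamma$, so that $\Psi$ becomes piecewise-defined. To overcome this I would work in the tangent cone $C_{\gamma(t_j)}$ and exploit that the norms on adjacent faces are glued by affine isometries along common sub-faces; this compatibility together with $C^1$-smoothness implies the differentials of $d(x_{j-1},\cdot)$ and $d(\cdot,x_{j+1})$ extend continuously across face transitions at $\gamma(t_j)$. The first variation of $\Psi$ is then continuous in every direction at $\gamma(t_j)$, and Lemma~\ref{l-radial-rays} together with strict convexity rules out any alternative critical direction, yielding a strict minimum across the piecewise structure. Finally, once $\Psi$ attains a strict local minimum at $(\gamma(t_j))_j$, the equality $\len(\gamma_1) = \len(\gamma)$ forces $x_j = \gamma(t_j)$ for every $j$ and each $\gamma_1|_{[t_{j-1},t_j]}$ to be a minimizer between $\gamma(t_{j-1})$ and $\gamma(t_j)$; strict convexity of the norm on $\sigma_j$ identifies this minimizer with the segment $\gamma|_{[t_{j-1},t_j]}$, so $\gamma_1$ reparametrizes $\gamma$.
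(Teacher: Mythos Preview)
Your reduction to the finite-dimensional functional $\Psi$ is sound, and in the ``main regime'' (each $x_j$ staying in a face adjacent to the segment through $\gamma(t_j)$) the convexity argument goes through. The gap is precisely where you flag it: variations that push some $x_j$ into a face not traversed by $\gamma$. There your proposed fix does not work. At a genuine vertex $p=\gamma(t_j)$ the tangent cone is not a vector space, so there is no linear structure in which ``the differentials of $d(x_{j-1},\cdot)$ extend continuously'' makes sense; the distance from $x_{j-1}$ to a point in an off-path face is an infimum over break points on a lower-dimensional face and is generally \emph{not convex}. Consequently ``nonnegative first variation in every direction'' (which does follow from $\gamma$ being geodesic, via cone homogeneity) does not by itself force a local minimum, and Lemma~\ref{l-radial-rays} only tells you about radial segments, not about this functional. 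Note that Observation~\ref{ob:nonsmooth} shows stability genuinely fails without $C^1$ norms, so any correct argument must use $C^1$ in a sharper way than merely having a well-defined first variation.

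The paper avoids this difficulty by sampling not at the vertices $t_j$ but at points $s_i$ in the \emph{interior} of each edge, and comparing $\gamma'$ to $\gamma$ across the orthogonal slices $H_\gamma(s_i)$ there. The vertex is then handled inside the tangent cone by extending the two incident rays to infinity and invoking the Busemann-type limit
\[
\lim_{|t|\to\infty}\bigl(\|q-\gamma(t)\|-|t-t_0|\bigr)=0,\qquad q\in H_\gamma(t_0),
\]
which is exactly where the $C^1$ hypothesis enters. This turns the comparison near each vertex into a statement about long radial segments in the cone, where homogeneity gives the contradiction with local minimality. Your vertex-based sampling forces you to confront the full cone geometry of the star of $\gamma(t_j)$ directly, and the tools you list are not enough to close that case.
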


\begin{proof}
We begin with some preliminaries from elementary
geometry of a normed space.
Let $(V^n,\|\cdot\|)$ be a normed vector space
whose norm is $C^1$ and strictly convex.
For a unit vector $v\in V$, consider the hyperplane
$v^\perp\subset V$ defined by
$$
 v^\perp = \big\{ w\in V : \tfrac d{dt}\big|_{t=0} \|v+tw\| = 0 \big\} .
$$
In other words, $v^\perp$ is the kernel of the derivative
of the norm at~$v$. We refer to this hyperplane as the
\textit{orthogonal complement} of~$v$.
(Note that the relation $w\in v^\perp$ is not symmetric,
and there is no reasonable symmetric notion of orthogonality
in normed spaces.)

Let $\gamma$ be a straight line in $V$
parameterized by arc length:
$\gamma(t)=a+vt$ where $a,v\in V$, $\|v\|=1$.
For every $t$, consider the affine hyperplane
$H_{\gamma}(t)$ parallel to $v^\perp$ through $\gamma(t)$.
We refer to this hyperplane as the
\textit{orthogonal slice} for $\gamma$ at~$t$,
and say that the family of hyperplanes is the
orthogonal slicing.

Let $q\in H_\gamma(t_0)$. Then
\begin{equation}
\label{e-strict-busemann}
 \|q-\gamma(t)\| \ge |t-t_0|
\end{equation}
for all $t\in\R$,
with equality only if $q=\gamma(t_0)$
 (since $\|\cdot\|$ is strictly convex), and
\begin{equation}
\label{e-busemann}
  \lim_{|t|\to\infty} \big( \|q-\gamma(t)\| - |t-t_0| \big) = 0
\end{equation}
due to the fact that $\|\cdot\|$ is differentiable at~$v$.

If $F\subset V$ is a convex polyhedral set containing
a point $\gamma(t)$,
we denote by $H^F_\gamma(t)$ the intersection
$H_\gamma(t)\cap F$ and refer to this intersection
as an orthogonal slice for $\gamma$ in~$F$.

Now let $\gamma$ be a segment in a Finsler PL space $X$
and $p=\gamma(t)$ an interior point of this segment.
In each $k$-dimensional face $F$ containing $p$
there is a $(k-1)$-dimensional orthogonal slice
$H^F_\gamma(t)$ for $\gamma$ at~$t$.
These slices agree on the intersections of faces
(since the norms on faces agree).
We denote by $H_\gamma(t)$ the union of the sets $H^F_\gamma(t)$
over all faces containing $p$
and refer to this set as the \textit{local orthogonal slice}
for~$\gamma$ at~$t$. Clearly it divides the star of $p$
into two components.

Let $\gamma\colon[a,b]\to X$ be a geodesic.
By Lemma \ref{l-geodesics-are-broken-lines},
$\gamma$ is a broken line.
Let $p_0=\gamma(t_0),p_1,\dots, p_n=\gamma(t_n)$ be its vertices
where $a=t_0\le t_1\le\dots\le t_n=b$.
For each $i=1,\dots,n$ fix a point 
$s_i\in(t_{i-1},t_i)$
and let $H_i$ denote the local orthogonal slice
for the $i$-th edge of $\gamma$ at~$s_i$.

Suppose that $\gamma$ is not stable and let $\gamma'$
be a nearby curve connecting the same endpoints and
such that $\len(\gamma')\le\len(\gamma)$.
We may assume that $\gamma$ and $\gamma'$
have no common initial interval (otherwise take the
point where they first split as the new starting point).
If $\gamma'$ is sufficiently close to $\gamma$,
it intersects the slices $H_i$ at some points
$\gamma'(s'_i)$  where $s'_i\in(t_{i-1},t_i)$.
Furthermore the intervals $\gamma'([a,s'_1])$,
$\gamma'([s'_i,s'_{i+1}])$, $\gamma'([s'_n,b])$ of $\gamma'$ are 
close to the respective intervals of $\gamma$
and hence contained in
the stars of the respective points $p_0$, $p_1$,\dots, $p_n$.

\begin{lemma}
\label{l-between-slices}
$\len(\gamma'|_{[s'_i,s'_{i+1}]}) \ge \len(\gamma|_{[s_i,s_{i+1}]})$
for all $i=1,\dots,n-1$.
\end{lemma}

\begin{proof}
Fix $i\in\{1,\dots,n\}$ and let
$p=\gamma(t_i)$, $p_-=\gamma(s_i)$, $p_+=\gamma(s_{i+1})$,
$p'_-=\gamma'(s'_i)$, $p'_+=\gamma'(s'_{i+1})$.
Let $K$ denote the tangent cone to $X$ at~$p$.
Recall that $\gamma'([s'_i,s'_{i+1}])$ is contained in the star of~$p$.
The star of $p$ admits a natural arc-wise isometry onto a neighborhood
of the apex in~$K$. We abuse notation and use the same letters for
points in the star and their images in the cone.
Extend $\gamma|_{[s_i,s_{i+1}]}$ parameterizing
the union of radial segments $[p,p_+]$ and $[p,p_-]$ in the cone
to a curve $\bar\gamma\colon\R\to K$ made
of two radial rays $\bar\gamma|_{(-\infty,t_i]}$
and $\bar\gamma|_{[t_i,+\infty)}$ through $p_-$ and $p_+$, respectively.
Suppose that
$$
 \len(\gamma'|_{[s'_i,s'_{i+1}]}) = \len(\gamma|_{[s_i,s_{i+1}]}) - \ep
$$
where $\ep>0$. Since $p'_-$ belongs to the star of $p_-$,
there is a face of $K$ containing $p_-$ and $p'_-$.
Applying \eqref{e-busemann} to the norm of this face yields that
$$
 \lim_{t\to-\infty}
 \big( \len[\bar\gamma(t),p'_-] - \len[\bar\gamma(t),p_-] \big) = 0 .
$$
Therefore there exists $t_-\in (-\infty,t_i]$
such that
$$
 \len[\bar\gamma(t_-),p'_-] < \len[\bar\gamma(t_-),p_-] + \frac\ep2 .
$$
Similarly there exists $t_+\in [t_i,+\infty)$
such that
$$
 \len[p'_+,\bar\gamma(t_+)] < \len[p_+,\bar\gamma(t_-)] + \frac\ep2
$$
(here the segments lie in a face containing both $p_+$ and $p'_+$).
These inequalities imply that the curve in $K$ composed from
$\gamma'|_{[s'_i,s'_{i+1}]}$ and the segments $[\bar\gamma(t_-),p'_-]$
and $[p'_+,\bar\gamma(t_+)]$ is shorter than $\bar\gamma|_{[t_-,t_+]}$.
Hence $\bar\gamma|_{[t_-,t_+]}$ is not a shortest path.
Shrinking this curve to a small neighborhood of $p$
(by using homogeneity of the cone) shows that arbitrarily
small intervals of $\gamma$ near $p$ are also not shortest paths.
Therefore $\gamma$ is not a geodesic, a contradiction.
\end{proof}

Now let us compare the initial intervals $\gamma|_{[a,s_1]}$
and $\gamma'|_{[a,s'_1]}$ of $\gamma$ and $\gamma'$.
Again, we may assume that $X$ is a cone with apex $p:=\gamma(a)$.
Applying \eqref{e-strict-busemann} within a face $F$
containing both $\gamma(s_1)$ and $\gamma'(s'_1)$
and taking into account Lemma \ref{l-radial-rays},
we conclude that
$$
 \len(\gamma'|_{[a,s'_1]}) \ge \dist_F(p,\gamma'(s'_1))
 \ge \dist_F(p,\gamma(s_1)) = \len(\gamma|_{[a,s_1]}) ,
$$
and this inequality turns into equality only if
$\gamma'|_{[a,s'_1]}$ is a radial segment from $p$
and $\gamma'(s'_1)=\gamma(s_1)$. Since we are assuming
that $\gamma'$ and $\gamma$ do not have a common
initial interval, this equality case is impossible.
Thus
$$
 \len(\gamma'|_{[a,s'_1]}) > \len(\gamma|_{[a,s_1]}) .
$$
Similarly,
$$
 \len(\gamma'|_{[s'_n,b]}) \ge \len(\gamma|_{[s_n,b]}) .
$$
Summing up these two inequalities and the inequalities
from Lemma \ref{l-between-slices} yields that
$\len(\gamma')>\len(\gamma)$.
This finishes the proof of Theorem \ref{t-geodesics-are-stable}.
\end{proof}

\section{Uniqueness of geodesics in homotopy classes}
\label{uniqueness}

Let $X$ be a length space.
We say that $X$ \textit{has locally unique geodesics}
if every point of $X$ has a neighborhood such any
two points in this neighborhood are connected by
a unique shortest path.
(This is the same as Definition \ref{d:nonfocusing}
but without that assumption that the space is PL.)

A metric space $X$ is called \textit{proper} if every
closed ball in $X$ is compact.
Recall that every complete locally compact
length space is proper.

\begin{theorem}
\label{t-uniqueness-in-length-space}
Let $X$ be a proper length space
with locally unique geodesics.
Suppose that every geodesic segment in $X$
is stable.
Then there is only one geodesic segment between any two points
in each homotopy class.
\end{theorem}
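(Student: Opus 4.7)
The plan is to pass to the universal cover, extract a rigidity lemma from stability, and then derive a contradiction from an open--closed argument on a continuous family of broken local geodesics coming from a homotopy between two putative distinct geodesics.

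First I would pass to the universal cover $\tilde X\to X$. All three hypotheses pass: local uniqueness and stability are local properties in the $C^0$ topology on curves, inherited through the local isometry, and $\tilde X$ is proper by Hopf--Rinow since it is a complete locally compact length space. Homotopy classes rel endpoints of paths from $p$ to $q$ in $X$ correspond bijectively to choices of a lift $\tilde q$ above a fixed $\tilde p$, so the theorem reduces to the claim that in a simply connected proper length space with locally unique stable geodesics, any two points are connected by at most one geodesic. A key rigidity lemma, immediate from stability applied in both directions, is that two geodesics between the same pair of points that are uniformly sufficiently close must coincide; hence the set of geodesics between any prescribed pair of points is discrete in the uniform topology.

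Assume for contradiction that $\gamma_0\ne\gamma_1$ are distinct geodesics from $p$ to $q$ in the simply connected cover, and let $H\colon[0,1]\times[a,b]\to\tilde X$ be a rel-endpoints homotopy. By compactness of the square, uniform continuity of $H$, and a uniform local-uniqueness radius extracted from compactness of the image of $H$, I would choose a partition $a=t_0<\cdots<t_n=b$ fine enough that for every $s\in[0,1]$ the points $H(s,t_j)$ and $H(s,t_{j+1})$ lie in a common neighborhood where shortest paths are unique. Let $\tilde\gamma_s\colon[a,b]\to\tilde X$ be the broken curve formed by concatenating those unique local shortest paths between consecutive sample points. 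Then $s\mapsto\tilde\gamma_s$ is continuous in the uniform topology, and by applying local uniqueness to $\gamma_0$ and $\gamma_1$ themselves, $\tilde\gamma_0=\gamma_0$ and $\tilde\gamma_1=\gamma_1$.

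Let $S$ be the subset of $[0,1]$ consisting of those $s$ for which $\tilde\gamma_s$ is a genuine geodesic, i.e., has no corner at any vertex $H(s,t_j)$. Then $\{0,1\}\subset S$, and $S$ is closed because a uniform limit of locally minimizing curves in a proper length space is locally minimizing (using properness plus a uniform local-uniqueness radius along the limit curve). If one can also show $S$ is open, then $S=[0,1]$, and the rigidity lemma forces the continuous family $\{\tilde\gamma_s\}$ of geodesics to be locally, hence globally, constant in $s$, giving $\gamma_0=\gamma_1$, a contradiction. The openness of $S$ is the main obstacle: a small perturbation of the vertices of a genuine geodesic can a priori introduce small corners in $\tilde\gamma_{s'}$, and ruling this out requires stability of $\tilde\gamma_s$---a corner in $\tilde\gamma_{s'}$ would allow a local length-decreasing surgery incompatible with the absence of shorter nearby curves guaranteed by the stability neighborhood of $\tilde\gamma_s$. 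Turning this intuition into a rigorous argument in the abstract proper length-space setting, with no recourse to any Finsler structure, is the technical heart of the proof.
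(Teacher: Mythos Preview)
Your reduction to the universal cover and your rigidity lemma (two sufficiently $C^0$-close geodesics with common endpoints coincide, hence the set of geodesics from $p$ to $q$ is discrete) are correct and useful. The genuine gap is the openness of $S$, which you identify as the crux but do not actually establish---and the argument you sketch for it does not work. The intermediate vertices $H(s,t_j)$ are dictated by the homotopy $H$, not by any variational condition, so for $s'$ close to some $s_0\in S$ the broken curve $\tilde\gamma_{s'}$ will \emph{generically} have corners: nothing forces the sample points of a perturbed homotopy to lie on a geodesic. Your stability argument says only that straightening a corner of $\tilde\gamma_{s'}$ produces a curve near $\tilde\gamma_{s_0}$ which is therefore no shorter than $\tilde\gamma_{s_0}$; this yields $\len(\tilde\gamma_{s'})>\len(\tilde\gamma_{s_0})$, which is already implied by stability and in no way forces $\tilde\gamma_{s'}$ to be a geodesic. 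So $S$ can easily be $\{0,1\}$, and the open--closed argument collapses.

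The paper avoids this obstacle altogether. Instead of hoping the discretized homotopy consists of geodesics, it builds a \emph{continuous curve-shortening retraction} onto geodesics: a midpoint-replacement map $T$ on broken geodesics that does not increase edge lengths and strictly decreases the energy $E(s)=\sum|x_ix_{i+1}|^2$ unless $s$ is already a geodesic with equal edge lengths. Stability is used, not to rule out corners, but to show that the iterates $T^k(s)$ actually converge and that the limiting map $T^\infty$ is continuous (near a geodesic $s_0$ one traps the iterates in sublevel sets of $E$ inside a stability neighborhood of $s_0$). Applying $T^\infty$ to the discretized homotopy produces a genuine continuous one-parameter family of geodesics from $p$ to $q$; then stability gives a function $\tau\mapsto\len(\bar s_\tau)$ with a strict local minimum at every $\tau$, a contradiction---equivalent to your discreteness-plus-connectedness endgame. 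The missing ingredient in your proposal is precisely this shortening flow and the proof of its continuity; without it there is no way to manufacture the continuous family of \emph{geodesics} that your final step needs.
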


This Theorem together with Theorem \ref{t-geodesics-are-stable} immediately imply
Theorem \ref{glob}. 

\begin{remark*}
The statement of the theorem is very similar
to e.g.\ the Cartan--Hadamard theorem for
locally CAT(0) spaces
or Riemannian manifolds without conjugate points.
However we face an additional difficulty here:
although every geodesic segment has a neighborhood
where it is a unique length minimizer,
the sizes of these neighborhoods are not
\textit{a priori} locally bounded away from zero.
This prevents one from proving the theorem
by a general topology argument.
For example, there could be a continuous family
$\{\gamma_\tau\}$, $\tau\in(-\ep,\ep)$, of minimizing geodesics 
starting at the same point $p\in X$ and ending
at points $q_\tau$ which behave like the values of the
function $\tau\mapsto\tau^2$ (that is, $q_{-\tau}=q_\tau$).
The geodesics in this family are locally unique
in our sense but the endpoint map is not locally injective.
\end{remark*}

\begin{proof}[Proof of Theorem \ref{t-uniqueness-in-length-space}]
For $x\in X$, denote by $\rho(x)$ the maximal radius $r$
such that every two points in the ball $B_r(x)$
are connected by a unique shortest path.
For a set $K\subset X$,
let $\rho(K)=\inf\{\rho(x):x\in K\}$.
The function $x\mapsto\rho(x)$ is obviously
continuous, hence $\rho(K)>0$ for every pre-compact set $K$.
We refer to $\rho(K)$ as the \textit{uniqueness radius} of~$K$.

A \textit{broken geodesic} is a curve composed of several
minimal geodesic segments. We will only consider broken
geodesics with sufficiently short edges  so that every
edge lies well within the uniqueness radius of their endpoints.
Such a broken geodesic is determined by the sequence of
its vertices. 

More precisely, for a positive integer $n$ let
$\Omega^n\subset X^{n+1}$ be the set of all
sequences $s=(x_0,x_1,\dots,x_n)\in X^{n+1}$
such that the maximum edge length
$\delta(s):=\max\{|x_ix_{i+1}|\}$
and the total length
$L(s):=\sum |x_ix_{i+1}|$ 
satisfy
$$
 2\delta(s) < \rho (B_{L(s)}(x_0)) .
$$
The points $x_0$ and $x_n$ are referred to as the endpoints of~$s$.
The set of $s\in\Omega^n$ with fixed endpoints $p,q\in X$
is denoted by $\Omega^n_{pq}$.
Clearly $\Omega^n$ is an open subset of $X^{n+1}$
and $\Omega^n_{pq}$ is pre-compact.
A sequence $(x_0,\dots,x_n)\in\Omega^n$ corresponds to a geodesic in $X$
if and only if $|x_{i-1}x_i|+|x_ix_{i+1}|=|x_{i-1}x_{i+1}|$
for all $i=1,\dots,n-1$.
We call such sequences $s$ \textit{geodesics}.

A point $z\in X$ is a \textit{midpoint} between
$x,y\in X$ if $|xz|=|yz|=\frac12|xz|$.
If $x$ and $y$ are sufficiently close to each other
(e.g.\ if $|xy|<\rho(x)$), then a midpoint between
$x$ and $y$ is unique and depends continuously
on $x$ and $y$.
We define the \textit{midpoint shortening} map
$T\colon\Omega^n\to\Omega^n$ as follows.
Let $s=(x_0,\dots,x_n)\in\Omega^n$.
For each $i=1,\dots,n$, let $y_i$
be the midpoint between $x_{i-1}$ and $x_i$.
By the triangle inequality we have
$$
 |y_iy_{i+1}| \le \tfrac12 (|x_{i-1}x_i|+|x_ix_{i+1}|)
$$
for all $i=1,\dots,n-1$.
In addition, $|x_0y_1|=\frac12|x_0x_1|$ and
$|y_nx_n|=\frac12|x_{n-1}x_n|$.
It follows that $|y_iy_{i+1}|\le \delta(s)$ and
the length of the broken geodesic with
vertices $x_0,y_1,\dots,y_n,x_n$ is not greater than $L(s)$.
For each $i=1,\dots,n-1$, let $x'_i$ be the midpoint
between $y_i$ and $y_{i+1}$.
We define 
$$
T(s)=(x_0,x'_1,\dots,x'_{n-1},x_n) .
$$
Applying the triangle inequality again yields
that $L(T(s))\le L(s)$.
For $1\le i\le n-2$ we have
$$
 |x'_ix'_{i+1}| \le \tfrac12 (|y_iy_{i+1}|+|y_{i+1}y_{i+2}|) 
 \le\tfrac14|x_{i-1}x_i|+\tfrac12|x_ix_{i+1}|+\tfrac14|x_{i+1}x_{i+2}|
 \le \delta(s) ,
$$
and for the first edge of $T(s)$ we have
$$
 |x_0x'_1| \le |x_0y_1| + \tfrac12|y_1y_2|
 = \tfrac12|x_0x_1| + \tfrac12|y_1y_2|
 \le \tfrac34|x_0x_1|+\tfrac14|x_1x_2|
 \le \delta(s) .
$$
Similarly, 
$$
|x'_{n-1}x_n| \le \tfrac14|x_{n-2}x_{n-1}|+\tfrac34|x_{n-1}x_n| \le \delta(s).
$$
Thus $\delta(T(s))\le \delta(s)$ and $L(T(s))\le L(s)$,
therefore $T(s)\in\Omega^n$.

This we have constructed a continuous map $T\colon\Omega^n\to\Omega^n$
which preserves endpoints and 
such that $\delta(T(s))\le\delta(s)$
and $L(T(s))\le L(s)$ for all $s\in\Omega^n$.
Note that the equality $L(T(s))=L(s)$ is attained
only if $s$ is a geodesic
(otherwise one of the intermediate
triangle inequalities is strict).

For $s=(x_0,\dots,x_n)\in\Omega^n$, let $E(s)=\sum |x_ix_{i+1}|^2$
(this quantity is similar to the energy of a path in a Riemannian manifold).
Note that $E(s)\ge \frac1n L(s)^2$ and equality is attained
if and only if the edges of $s$ have equal lengths.

\begin{lemma}
\label{l-energy}
For every $s\in\Omega^n$, we have $E(T(s))\le E(s)$.
The equality $E(T(s))=E(s)$ is attained only if $T(s)=s$,
and in this case $s$ is geodesic and all its edges
have equal lengths.
\end{lemma}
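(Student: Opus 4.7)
The plan is to repackage the edge estimates established in the construction of $T$ as a componentwise linear inequality $\ell'_j \le \sum_k a_{jk}\ell_k$, where $\ell_i := |x_{i-1}x_i|$, $\ell'_j := |x'_{j-1}x'_j|$ (with the conventions $x'_0 := x_0$ and $x'_n := x_n$), and $A = (a_{jk})$ is the tridiagonal matrix whose rows read $(\tfrac34,\tfrac14,0,\dots)$, $(\tfrac14,\tfrac12,\tfrac14,0,\dots)$, \dots, $(\dots,0,\tfrac14,\tfrac34)$. Direct inspection shows that $A$ is doubly stochastic: each row sums to $1$ because the given bounds are convex combinations, and each column sums to $1$ as well (the two boundary columns collect $\tfrac34+\tfrac14$, each interior column collects $\tfrac14+\tfrac12+\tfrac14$). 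This is the only combinatorial input needed.

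Squaring the componentwise bound, applying convexity of $t \mapsto t^2$ to each row, and using column-stochasticity,
\[
E(T(s)) \;=\; \sum_j (\ell'_j)^2 \;\le\; \sum_j\Bigl(\sum_k a_{jk}\ell_k\Bigr)^{2} \;\le\; \sum_j\sum_k a_{jk}\ell_k^2 \;=\; \sum_k \ell_k^2 \;=\; E(s).
\]
For the equality case, $E(T(s)) = E(s)$ forces sharpness at every step. Sharpness in the convexity step requires, for each row $j$, all $\ell_k$ with $a_{jk}>0$ to coincide; since the supports of consecutive rows overlap in two indices, chaining yields $\ell_1 = \cdots = \ell_n =: \ell$. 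Sharpness in the componentwise bound $\ell'_j \le \sum_k a_{jk}\ell_k$ requires each midpoint triangle inequality used in its derivation to be an equality; the key consequence is $|y_iy_{i+1}| = |y_ix_i|+|x_iy_{i+1}| = \ell/2+\ell/2$, so that $x_i$ is a midpoint of $y_i$ and $y_{i+1}$. Since $x'_i$ is by construction the midpoint of the same pair, and the assumption $2\delta(s) < \rho(B_{L(s)}(x_0))$ places both $y_i$ and $y_{i+1}$ well inside the uniqueness radius, midpoints are unique and we conclude $x_i = x'_i$ for all $i$, i.e., $T(s) = s$. In particular $L(T(s)) = L(s)$, and by the observation recorded just above the lemma, this forces $s$ to be a geodesic.

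The substantive step is the last one: extracting both $T(s) = s$ and the geodesic conclusion from the equality analysis. The verification of double stochasticity and the convexity argument are routine. The only delicate point is that the equality case of the midpoint triangle inequalities by itself only identifies $x_i$ as \emph{some} midpoint of $y_i$ and $y_{i+1}$ rather than automatically equal to $x'_i$; this ambiguity is resolved precisely by the local uniqueness of midpoints that the definition of $\Omega^n$ is engineered to guarantee.
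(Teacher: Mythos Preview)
Your proof is correct and follows essentially the same approach as the paper: both derive the tridiagonal bounds $\ell'_j \le \sum_k a_{jk}\ell_k$ from the construction of $T$, apply Jensen's inequality for $t\mapsto t^2$ row by row, and sum using that the columns of $A$ total to~$1$. The only difference is organizational: in the equality case the paper first asserts that sharpness forces $s$ to be a geodesic with equal edge lengths and then observes $T(s)=s$, while you go the other way around, using midpoint uniqueness to get $x_i=x'_i$ directly and then invoking the $L(T(s))=L(s)$ criterion for geodesicity---a slightly more explicit route, but not a genuinely different argument.
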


\begin{proof}
Let $l_1,\dots,l_n$ be the lengths of the edges of $s$
and $l'_1,\dots,l'_n$ the lengths of the edges of $T(s)$.
Applying the triangle inequality as in the above argument,
we obtain that
$$
\begin{aligned}
 l'_1 &\le \tfrac34 l_1 + \tfrac14 l_2, \\
 l'_i &\le \tfrac14 l_{i-1}+ \tfrac12 l_i + \tfrac14 l_{i+1}, \qquad 1\le i\le n-1, \\
 l'_n &\le \tfrac14 l_{n-1} + \tfrac34 l_n .
\end{aligned}
$$
By Jensen's inequality for the function $t\mapsto t^2$,
this implies that
$$
\begin{aligned}
 l^{\prime2}_1 &\le \tfrac34 l_1^2 + \tfrac14 l_2^2, \\
 l^{\prime2}_i &\le \tfrac14 l_{i-1}^2+ \tfrac12 l_i^2 + \tfrac14 l_{i+1}^2,
 \qquad 1\le i\le n-1, \\
 l^{\prime2}_n &\le \tfrac14 l_{n-1}^2 + \tfrac34 l_n^2 .
\end{aligned}
$$
Summing up these inequalities yields that $E(T(s))\le E(s)$.
If all these inequalities turn into equalities,
then $s$ is a geodesic and all lengths $l_i$ are equal.
Conversely, if $s$ is a geodesic with equal edge lengths,
then one easily sees from the definition of~$T$
that $T(s)=s$.
\end{proof}

\begin{lemma}
For every $s\in\Omega^n$, there exists a limit
$T^\infty(s):=\lim_{k\to\infty} T^k(s)$,
and this limit is a geodesic.
The map $T^\infty\colon\Omega^n\to\Omega^n$ is continuous.
\end{lemma}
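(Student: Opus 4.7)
The plan is to treat the iteration $s\mapsto T(s)$ as a discrete dynamical system with Lyapunov function $E$ from Lemma~\ref{l-energy}, and to exploit Theorem~\ref{t-geodesics-are-stable} to control the set of its fixed points. First I would observe that the orbit $\{T^k(s)\}_{k\ge 0}$ stays in a compact subset of $\Omega^n$: the endpoints are fixed by $T$, each vertex lies in the closed ball $\overline{B_{L(s)}(x_0)}$ (compact since $X$ is proper), and the bounds $L(T^k(s))\le L(s)$ and $\delta(T^k(s))\le\delta(s)$ keep the whole sequence inside the open set $\Omega^n$. By Lemma~\ref{l-energy}, $E(T^k(s))$ is a bounded, non-increasing sequence, hence convergent. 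For any subsequential limit $T^{k_j}(s)\to s^*$, continuity of $E$ and $T$ yields $E(T(s^*))=E(s^*)$, and the equality case of Lemma~\ref{l-energy} then forces $T(s^*)=s^*$, so $s^*$ is a geodesic with equal edge lengths.

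Next I would argue that the full sequence converges. A standard compactness argument shows $d(T^k(s),T^{k+1}(s))\to 0$ (otherwise, along a subsequence $T^{k_j}(s)\to s^*$ we would have $T^{k_j+1}(s)\to T(s^*)=s^*$, contradicting a positive lower bound on the gap), which in turn implies that the $\omega$-limit set $\omega(s)$ is connected. To conclude that $\omega(s)$ is a single point, I would establish that the fixed points of $T$ are locally discrete in $\Omega^n$: if two fixed points are close as tuples, the corresponding broken-geodesic curves from $p$ to $q$ are $C^0$-close, and Theorem~\ref{t-geodesics-are-stable} applied to each curve shows that each would be strictly shorter than the other unless the two curves coincide; since parameterization by equally spaced vertices is uniquely determined by the curve, the tuples themselves coincide. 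A connected subset of a discrete set is a point, so $\omega(s)=\{T^\infty(s)\}$ and $T^k(s)\to T^\infty(s)$, which is a geodesic.

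For continuity of $T^\infty$, let $s_m\to s$ and set $s^*=T^\infty(s)$. Choose a neighborhood $V$ of $s^*$ in the relevant compact set containing no other fixed point of $T$. For $K$ large, $T^K(s)$ lies well inside $V$, and by continuity of $T^K$ the same is true for $T^K(s_m)$ once $m$ is large. Using monotonicity of $L$ under $T$ together with the stability of the geodesic curve $\gamma^*$ corresponding to $s^*$ (Theorem~\ref{t-geodesics-are-stable} gives a neighborhood of $\gamma^*$ in curve space where every other curve from $p$ to $q$ is strictly longer than $\gamma^*$), one traps the subsequent orbit $\{T^k(s_m)\}_{k\ge K}$ inside $V$, so $T^\infty(s_m)\in V$; since $s^*$ is the only fixed point in $V$, this forces $T^\infty(s_m)=s^*$ for $m$ large.

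The main obstacle is this last continuity argument. Since $T^\infty$ takes values in the locally discrete fixed-point set of $T$, its continuity is equivalent to local constancy, i.e., to the openness of each basin of attraction. The delicate point is to prevent the orbit of a perturbed initial condition from being captured by a different nearby fixed point, and this requires carefully combining the Lyapunov descent of $L$ with the strict minimizing property provided by Theorem~\ref{t-geodesics-are-stable}; neither tool alone is sufficient, since isolated fixed points together with a Lyapunov function do not in general yield open basins.
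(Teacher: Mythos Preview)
Your convergence argument (precompact orbit, $\omega$-limit points are fixed points via the energy, connected $\omega$-limit set, fixed points isolated by stability) is correct and is a reasonable alternative to the paper's direct approach. The paper instead builds, for any subsequential limit $s_0$, an explicit $T$-invariant neighborhood $U_2=\{s'\in U:E(s')<E_0\}$ and uses it to trap the tail of the orbit; your route via connectedness of $\omega(s)$ and discreteness of the fixed-point set gets to the same conclusion.

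The continuity argument, however, has a genuine gap. You propose to trap the perturbed orbit near $s^*$ using monotonicity of $L$ together with stability of the geodesic curve $\gamma^*$. But $L$ is \emph{not} a strict local minimum at $s^*$ in $\Omega^n_{pq}$: any tuple obtained by placing the $n-1$ intermediate vertices at other points of $\gamma^*$ has the same value of $L$, so every sublevel set $\{L<L_0\}$ with $L_0>L(s^*)$ contains a whole ``reparametrization set'' through $s^*$ and cannot be squeezed inside an arbitrarily small neighborhood $V$. Hence you cannot manufacture a $T$-invariant neighborhood of $s^*$ inside $V$ from $L$ alone, and the trapping fails exactly at the point you flagged as delicate.

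The fix, which is what the paper does, is to use $E$ rather than $L$. Stability gives $L(s')\ge L(s^*)$ for nearby $s'\in\Omega^n_{pq}$, and $E(s')\ge \tfrac1n L(s')^2\ge \tfrac1n L(s^*)^2=E(s^*)$ with equality only when $s'$ has equal edge lengths and represents $\gamma^*$, i.e.\ $s'=s^*$. Thus $E$ has a strict local minimum at $s^*$, and one can choose $U\Subset U_0$, a neighborhood $U_1$ with $T(U_1)\subset U$, set $E_0=\inf\{E(s'):s'\in U\setminus U_1\}>E(s^*)$, and take $U_2=\{s'\in U:E(s')<E_0\}$. Then $U_2\subset U_1$ and $T(U_2)\subset U_2$, so once $T^{k_0}(s_m)\in U_2$ (which follows from continuity of $T^{k_0}$), the whole tail stays in $U_2\subset U$, giving continuity of $T^\infty$. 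Replacing $L$ by $E$ in your last paragraph closes the gap.
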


\begin{proof}
Let $p$ and $q$ be the endpoints of $s$.
By compactness, there exist a partial limit $s_0\in X^{n+1}$
of the sequence $\{T^k(s)\}$.
Since $\{\delta(T^k(s))\}$ and $\{L(T^k(s))\}$
are non-increasing sequences, we have
$
\delta(s_0)\le\delta(s)
$
and 
$
L(s_0)\le L(s) ,
$
hence $s_0\in\Omega^n$.
Similarly, Lemma \ref{l-energy} implies that
$E(s_0)=\inf_k E(T^k(s))$, hence $E(T(s_0))=E(s_0)$
and therefore $s_0$ is a geodesic with equal edge lengths
and $T(s_0)=s_0$.

Since all geodesic in $X$ are stable,
there is a neighborhood $U_0$ of $s_0$
such that $E(s')>E(s_0)$ for all $s'\in U_0\setminus\{s_0\}$.
Choose a neighborhood $U\Subset U_0$ of $s_0$.
Since $T(s_0)=s_0$, there is a neighborhood $U_1$ of $s_0$
such that $T(U_1)\subset U$.
Let
$$
E_0 = \inf \{ E(s') : s'\in U\setminus U_1\}
$$
and note that $E_0>E(s_0)$.
Let $U_2= \{ s'\in U:E(s')<E_0\}$,
then $U_2\subset U_1$ by the definition of $E_0$.
Observe that $T(U_2)\subset U_2$
since for every $s'\in U_2$ we have
$T(s')\in U$ by the choice of $U_1$ and
$E(T(s'))\le E(s')<E_0$ by Lemma \ref{l-energy}.

Since $T(U_2)\subset U_2$ and $U_2$ contains
some members of the sequence $\{T^k(s)\}$,
a tail of this sequence is contained in~$U_2\subset U$.
Since $U$ can be chosen arbitrarily small,
it follows that $T^k(s)\to s_0$ as $k\to\infty$.

Thus we have shown that the map $T^\infty=\lim_{k\to\infty} T^k$
is well-defined, let us show that it is continuous.
For $s\in\Omega^n$ and $s_0=T^\infty(s)$,
let $U$ and $U_2$ be as above.
There exists $k_0$ such that $T^{k_0}(s)\in U_2$.
Since $T^{k_0}$ is continuous, there is a neighborhood
$U'$ of $s$ such that $T^{k_0}(U')\subset U_2$.
Since $T(U_2)\subset U_2$, it follows that
$T^k(U')\subset U_2\subset U$ for all $k>k_0$
and therefore $T^\infty(U')$ is contained in
the closure of~$U$.
Since $U$ is an arbitrarily small neighborhood of $s_0$,
it follows that $T^\infty$ is continuous at $s$.
\end{proof}

Now we are in a position to prove the theorem.
Suppose that two geodesics
$\gamma_0,\gamma_1\colon [0,1]\to X$
connecting points $p$ and $q$
are homotopic via a homotopy
$H=\{\gamma_\tau\}_{\tau\in[0,1]}$
such that $\gamma_\tau(0)=p$
and $\gamma_\tau(1)=q$ for all $\tau$.
Choose a large $N$ and replace each path $\gamma_\tau$
by a broken geodesic $\bar\gamma_\tau$
with vertices at points $\gamma_\tau(i/N)$,
$i=0,1,\dots,N$. If $N$ is large enough,
the edges of these broken geodesics lie within
the uniqueness radius of the image of~$H$
and therefore vary continuously in~$\tau$.
Let $L_0$ be the maximum of lengths of $\bar\gamma_\tau$
and $\rho=\rho(B_{L_0}(p))$.
Subdividing each edge of $\bar\gamma_\tau$
into $M$ equal pieces where $M>2\rho^{-1}L_0$
yields a sequence $s_\tau\in\Omega^{M}_{pq}$.
Now we can apply our shortening procedure:
let $\bar s_\tau=T^\infty(s_\tau)$.
The family $\{\bar s_\tau\}$ is a continuous
family of geodesics connecting $p$ and~$q$.
Since all geodesics are stable,
the (continuous) function $\tau\mapsto L(\bar s_\tau)$
possesses an impossible property:
it has a strict local minimum at every point
$\tau\in[0,1]$. This contradiction proves
Theorem~\ref{t-uniqueness-in-length-space}.
\end{proof}

\section{Saddle Surfaces}
\label{saddle}

In this section we prove an analog of the main result from 
\cite{BI} for PL surfaces.

Let $V$ be a finite-dimensional normed space with
a $C^1$ smooth strictly convex norm. By a \textit{PL surface}
in $V$ we mean a PL map from a triangulated two-dimensional
manifold $M$ to~$V$ such that the image of every triangle
in $M$ is not degenerate. Such a map induces the structure
of a Finsler PL space on $M$.

Recall that a two-dimensional surface in a vector space
is \textit{saddle} (cf.~\cite{Shefel}) if one cannot cut off
a cap from the surface by a hyperplane.
Note that a PL surface $r\colon M\to V$ is saddle if
for every $x\in M$ and every neighborhood $U$ of $x$,
the convex hull of $r(U\setminus\{x\})$ contains $r(x)$.

\begin{theorem}
\label{cone}
Let $V$ be a 
finite-dimensional normed space with
a $C^1$ smooth strictly convex norm
and $r\colon M\to V$ a saddle PL surface.
Then the PL Finsler metric on~$M$ induced by $r$
is non-focusing.

Therefore if $M$ is simply connected then every two points
are connected by a unique geodesic.
\end{theorem}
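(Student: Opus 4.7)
The plan is to combine Theorem \ref{glob} with a local analysis at each point. First, the second assertion of Theorem \ref{cone} follows from the first via Theorem \ref{glob}, so it suffices to prove that the induced PL Finsler metric on $M$ is non-focusing. Since non-focusing is a local property and every point $p \in M$ has a neighborhood isometric to a neighborhood of the apex of its tangent cone $K_p$, this further reduces to showing that each tangent cone is non-focusing at its apex.

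The derivative of $r$ at $p$ induces a PL map $\hat{r}\colon K_p \to V$, using the canonical identification of $V$ with the tangent space at $r(p)$. The saddle condition is scale-invariant, so it passes to $\hat{r}$ at the apex: the image $\hat{r}(\mathrm{link}(p))$ is not contained in any open half-space of $V$. Thus $K_p$ is itself a saddle PL cone with apex $p$, and it suffices to prove the theorem for such a cone $K$: any two points sufficiently close to $p$ are joined by a unique shortest path in $K$.

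By Lemma \ref{l-radial-rays}, radial segments from $p$ are already unique minimizers, so the remaining task is to rule out two distinct minimizers $\alpha \neq \beta$ between nearby points $x, y$ when at least one of them avoids $p$. My approach would be to examine the region $D$ enclosed by $\alpha \cup \beta$ in the 2-manifold $K$. If $D$ avoids the apex, then $D$ lies in $K \setminus \{p\}$, where the orthogonal-slice technique of Theorem \ref{t-geodesics-are-stable}, combined with the strict convexity of the norm within a single face, rules out the coexistence of two distinct minimizers. If $D$ contains the apex, the saddle property should allow the construction of a strictly shorter competitor obtained by concatenating the unique radial segments $[p,x]$ and $[p,y]$ through $p$; the key ingredients here are the extrinsic inequality $d_K(a,b) \ge \|\hat{r}(a) - \hat{r}(b)\|_V$ together with the saddle condition on the link.

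The main obstacle will be the quantitative shortcut argument in the enclosing case: one must translate the extrinsic convexity statement about $\hat{r}(\mathrm{link}(p))$ in $V$ into an intrinsic strict length inequality of the form $|p,x|_K + |p,y|_K < \len(\alpha)$. I expect this to be the most delicate step, requiring the full strength of the strict convexity and $C^1$-smoothness of the norm on $V$, and plausibly a first-variation-type comparison between broken geodesics enclosing $p$ and the unique radial broken geodesic through $p$. In the Euclidean case this is subsumed by the cone-angle $\geq 2\pi$ condition and CAT(0), but in the Finsler setting one must build the competitor path by hand using the saddle convexity together with Lemma \ref{l-radial-rays}.
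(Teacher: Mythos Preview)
Your reduction to tangent cones and the dichotomy on whether the disk $D$ bounded by the two minimizers contains the apex match the paper's setup. However, both branches of your argument have genuine gaps.

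When $D$ avoids the apex, invoking Theorem~\ref{t-geodesics-are-stable} is not enough: stability says only that $\alpha$ beats \emph{nearby} curves, while $\beta$ need not be $C^0$-close to $\alpha$. The paper instead uses a direct convexity argument: since $\alpha$ and $\beta$ cross the same edges in the same order, their $i$-th vertices $x_i,y_i$ lie on a common edge and each pair of consecutive segments lies in a single face; the linear interpolation $\gamma_t$ between $\alpha$ and $\beta$ then has $t\mapsto\len(\gamma_t)$ strictly convex, contradicting $\len(\gamma_0)=\len(\gamma_1)$.

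When $D$ contains the apex, you aim for the strict inequality $|px|_K+|py|_K<\len(\alpha)$ directly, and you yourself flag this as the unresolved ``main obstacle''. The paper does not attempt that strict inequality. It uses the saddle condition to write $0=\sum v_i$ with $v_i\in \hat r(K\setminus\{p\})$; for the convex function $f(\cdot)=\|\cdot-\hat r(x)\|+\|\cdot-\hat r(y)\|$ some $v_j$ satisfies $d_0f(v_j)\ge 0$, hence $f(tv_j)\ge f(0)$ for $t\ge 0$. The radial ray in this direction meets (say) $\alpha$ at some $z$, giving $\len(\alpha)\ge f(\hat r(z))\ge f(0)=|px|_K+|py|_K$, which is only non-strict. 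To finish, the paper \emph{replaces} $\alpha$ by the radial broken line $[x,p,y]$ and handles the resulting configuration---one minimizer through the apex, one avoiding it---by yet another interpolation: scaling the vertices of the non-radial path toward the apex again makes the length strictly convex in the homotopy parameter. Your plan omits this third case altogether, and without it neither branch closes.
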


\begin{proof}
Since the statement is local, we may assume that our surface is a cone
with the apex at the origin.
That is, $M=\R^2$ and $r\colon\R^2\to V$ is a positive homogeneous
piecewise linear map.
We divide $\R^2$ by several radial rays
(that is, rays starting at the origin)
into convex sectors such that $r$ is linear
on each sector. These rays (and their images under $r$)
are referred to as \textit{edges}
and the sectors as \textit{faces}
of our surface.

We regard $\R^2$ with the PL Finsler metric induced by $r$. 
Every shortest path in this metric
is either a segment of a radial ray,
or a union of two segments with a common endpoint
at the origin, or a broken line that intersects every
radial ray at most once.

Suppose that there are two
points $p,q\in\R^2$ and two distinct shortest paths
connecting $p$ and $q$.
Denote these paths by $\gamma_0$ and $\gamma_1$.
We may assume that $\gamma_0$ and $\gamma_1$
have no common points except $p$ and $q$.
Note that $p$ and $q$ are distinct from the origin,
and $\gamma_1$ and $\gamma_2$ cannot be
mapped by $r$ to straight line segments in $V$
(since all segments are unique shortest paths in $V$).

Consider the following cases:

\textit{Case 1}:
$\gamma_0$ contains the origin
(and therefore $\gamma_0=[p,0,q]$).
Then $\gamma_1$ is a broken line
of the form $[p,x_1,x_2,\dots,x_k,q]$
where $x_1,\dots,x_k$ are some points on edges.
We include $\gamma_0$ and $\gamma_1$ in
a family of broken lines $\{\gamma_t\}_{t\in[0,1]}$
where
$$
 \gamma_t=[p,tx_1,tx_2,\dots,tx_k,q] .
$$
Then
$$
 \len(\gamma_t) = \|r(p)-tr(x_1)\| + \|r(q)-tr(x_k)\| 
 + t\cdot\sum_{i=1}^{k-1} \|r(x_i)-r(x_{i+1})\|.
$$
The first two terms are strictly convex functions in $t$
and the third term is linear, hence
the function $t\mapsto \len(\gamma_t)$
is strictly convex.
Since $\len(\gamma_0)=\len(\gamma_1)$, it follows that
$\len(\gamma_{1/2})<\len(\gamma_0)$,
hence $\gamma_0$ is not a shortest path.

\textit{Case 2}: the region enclosed by $\gamma_0$
and $\gamma_1$ does not contain the origin.
Then $\gamma_0$ and $\gamma_1$ intersect the
same set of edges and intersect the edges in the same order.
Let $\gamma_0=[p=x_0,x_1,\dots,x_k=q]$
and $\gamma_1=[p=y_0,y_1,\dots,y_k=q]$
where $x_i$ and $y_i$ ($i=1,\dots,k-1$)
are points on the edges. As noted above,
$x_i$ and $y_i$ are on the same edge
for every $i=1,\dots,k-1$,
hence each pair of segments $[x_i,x_{i+1}]$
and $[y_i,y_{i+1}]$, where $0\le i\le k-1$
is contained in one face of our surface.
Similarly to the first case,
include $\gamma_0$ and $\gamma_1$
in a family $\{\gamma_t\}_{t\in[0,1]}$
of broken lines defined by
$$
 \gamma_t = [s_0(t),s_1(t),\dots,s_k(t)]
$$
where $s_i(t)= (1-t)x_i+ty_i$.
Then
$$
 \len(\gamma_t) = \sum_{i=0}^{k-1} \|r(s_i(t))-r(s_{i+1}(t))\| .
$$
Each term of the sum is a convex function of $t$
and at least one of them (e.g.\ the one for $i=0$)
is strictly convex. Hence the sum is a strictly convex
function of $t$, and we obtain a contradiction
as in Case~1.

\textit{Case 3}: the region enclosed by $\gamma_0$ and $\gamma_1$
contains the origin. In this case every radial ray in $\R^2$
intersects the union of $\gamma_1$ and $\gamma_2$.

Since our surface is saddle, the origin 0 of $V$ is
contained in the convex hull of the set $r(\R^2)\setminus\{0\}$.
Therefore 0 is a positive linear combination of
a finite collection of nonzero vectors from the image of~$r$.
Since the surface is a cone, these vectors
can be rescaled so that the coefficients
of the linear combination are equal to~1.
Thus there exist vectors $v_1,\dots,v_k\in r(\R^2\setminus\{0\})$
such that $\sum v_i=0$.

Define a function $f:V\to\R$ by
$$
 f(x)=\|x-r(p)\|+\|x-r(q)\| .
$$
This function is convex and differentiable outside the set
$\{r(p),r(q)\}$, in particular, it has
a derivative $d_0f$ at the origin.
Since $\sum v_i=0$, we have  $\sum d_0f(v_i)=0$, 
therefore there exists $j\in\{1,\dots,k\}$ such that
$d_0f(v_j)\ge 0$. Since $f$ is convex, it follows that
$f(tv_j)\ge f(0)$ for all $t\ge 0$.

The ray $\{tv_j:t\ge 0\}$ is the $r$-image of
some radial ray $\ell\subset\R^2$.
Recall that every radial ray in $\R^2$
intersects at least one of the shortest paths $\gamma_0$ and $\gamma_1$.
Without loss of generality assume that a point
$z\in\ell$ lies on $\gamma_0$. Then
$$
 \len(\gamma_0) \ge \|r(p)-r(z)\|+\|r(z)-r(q)\| = f(r(z)) \ge f(0)
$$
since $r(z)\in\{tv_j:t\ge 0\}$.
Note that $f(0)$ is the length of the broken line $[r(p),0,r(q)]$
which is the $r$-image of the broken line $[p,0,q]$.
Thus $\len(\gamma_0)\ge \len[p,0,q]$.
Since $\gamma_0$ is a shortest path,
it follows that $\len(\gamma_0)=\len[p,0,q]$.
Now we can replace $\gamma_0$ by $[p,0,q]$
and thus reduce Case~3 to Case~1.

Now the second statement of the Theorem follows immediately
from Theorem~\ref{glob}.
\end{proof}

\bibliographystyle{plain}

\end{document}